\newtheorem{theorem}{Theorem}[section]
\newtheorem{proposition}[theorem]{Proposition}
\newtheorem{lemma}[theorem]{Lemma}
\newtheorem*{claim*}{Claim}
\newtheorem{corollary}[theorem]{Corollary}
\newtheorem{Main Conjecture}[theorem]{Main Conjecture}
\theoremstyle{definition}
\newtheorem{definition}{Definition}
\theoremstyle{remark}
\theoremstyle{plain}
\newcommand{\cellsize}{11}
\newlength{\cellsz} \setlength{\cellsz}{\cellsize\unitlength}
\newsavebox{\cell}
\sbox{\cell}{\begin{picture}(\cellsize,\cellsize)
\put(0,0){\line(1,0){\cellsize}}
\put(0,0){\line(0,1){\cellsize}}
\put(\cellsize,0){\line(0,1){\cellsize}}
\put(0,\cellsize){\line(1,0){\cellsize}}
\end{picture}}
\newcommand\cellify[1]{\def\thearg{#1}\def\nothing{}%
\ifx\thearg\nothing
\vrule width0pt height\cellsz depth0pt\else
\hbox to 0pt{\usebox{\cell} \hss}\fi%
\vbox to \cellsz{
\vss
\hbox to \cellsz{\hss$#1$\hss}
\vss}}
\newcommand\tableau[1]{\vtop{\let\\\cr
\baselineskip -16000pt \lineskiplimit 16000pt \lineskip 0pt
\ialign{&\cellify{##}\cr#1\crcr}}}
\newcommand{\excise}[1]{}
\renewcommand{\Pr}[1]{\mathrm{Pr}\left[#1\right]}
\newcommand{\E}[1]{\mathbb{E}\left[#1\right]}
\newcommand{\V}[1]{\mathrm{Var}\left[#1\right]}
\newcommand{\iv}[1]{#1^{-1}}
\newcommand{\cE}{\mathcal{E}}
\let\gO=\Omega
\title{Proper permutations, Schubert geometry, and randomness}
\author{David Brewster}
\author{Reuven Hodges}
\author{Alexander Yong}
\address{Dept.~of Mathematics, University of Illinois at Urbana-Champaign, Urbana, IL 61801}
\email{davidb2@illinois.edu, rhodges@illinois.edu, ayong@illinois.edu}
\date{December 17, 2020}
\begin{document}
\maketitle

\begin{abstract}
We define and study \emph{proper} permutations. Properness is a geometrically natural necessary criterion for a Schubert variety to be 
Levi-spherical. We prove the probability that a random permutation is proper  goes to zero in the limit.
\end{abstract}

\section{Introduction}

Let $X$ denote the variety of complete flags $\langle 0\rangle \subset F_1 \subset F_2 \subset \cdots \subset
F_{n-1}\subset {\mathbb C}^n$, where $F_i$ is a subspace of dimension $i$. The general linear group $GL_n$
of invertible $n\times n$ complex matrices acts naturally on $X$ by basis change. Let $B\subset GL_n$ be the Borel subgroup
of upper triangular invertible matrices. $B$ acts on $X$ with finitely many orbits; these are the \emph{Schubert cells} $X_w^{\circ}$
indexed by permutations $w$ in the symmetric group $S_n$ on $[n]:=\{1,2,\ldots,n\}$. Their closures $X_w:=\overline{X_{w}^{\circ}}$
are the \emph{Schubert varieties}; these objects are of significant interest in combinatorial algebraic geometry. A standard reference
is \cite{Fulton} and we also point the reader to the expository papers \cite{Gillespie, KL}. 

Now, $\dim{X_w}=\ell(w)$ where 
$\ell(w)=\#\{1\leq i<j\leq n: w(i)>w(j)\}$
counts \emph{inversions} of $w$. Also, let 
\[J(w)=\{1\leq i\leq n-1: \exists \ 1\leq j<i, w(j)=i+1\}\] be the set
of \emph{left descents} of $w$. Assume $I\subseteq J(w)$ and let 
\[D:=[n-1]-I=\{d_1<d_2<\ldots<d_k\};\]
also, $d_0:=0, d_{k+1}:=n$. Let $L_I\subseteq GL_n$ be the Levi subgroup of invertible block diagonal matrices
\[L_I\cong GL_{d_1-d_0}\times GL_{d_2-d_1}\times \cdots \times GL_{d_k-d_{k-1}} \times GL_{d_{k+1}-d_k}.\]
As explained in, \emph{e.g.}, \cite[Section~1.2]{Hodges.Yong}, $L_I$ acts on $X_w$. Moreover, $X_w$ is said to be 
\emph{$L_I$-spherical} if $X_w$ has a dense orbit of a Borel subgroup of $L_I$. If in addition, $I=J(w)$, we say
$X_w$ is \emph{maximally spherical}. We refer the reader to \emph{ibid.}, and the
references therein, for background and motivation about this geometric condition on a Schubert variety.

\begin{definition}
Let $d(w)=\#J(w)$. $w\in S_n$
is  \emph{proper} if $\ell(w)-{d(w)+1\choose 2}\leq n$.
\end{definition}

Actually, for $1\leq n\leq 10$, proper permutations are not rare; the enumeration is:
\[1, 2, 6, 24, 120, 684, 4348, 30549, 236394, 2006492,\ldots\]

Proposition~\ref{prop:Schubsphericalisgood} shows that if $X_w$ is $L_I$-spherical for some $I\subseteq J(w)$, then
$w$ is proper. The proof explains the Lie-theoretic origins of the condition. In this paper, we study proper permutations using
probabilistic considerations.

\begin{theorem}
\label{thm:main2}
If $w\in S_n$ is chosen uniformly at random, ${\rm Pr}[w \text{\ is proper}]\longrightarrow 0$, as $n\to\infty$.
\end{theorem}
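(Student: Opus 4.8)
The plan is to show that with probability $1-o(1)$ a uniform random $w\in S_n$ satisfies $\ell(w)>n+\binom{d(w)+1}{2}$, so that $w$ fails to be proper. The heuristic driving everything is that $\ell(w)$ concentrates around $\binom{n}{2}/2\sim n^2/4$ while $d(w)$ concentrates around $n/2$, making $\binom{d(w)+1}{2}\sim n^2/8$; the gap $n^2/4-n^2/8=n^2/8$ swamps the additive constant $n$, so properness typically fails. The one structural point I would stress at the outset is that, although $\ell(w)$ and $d(w)$ are correlated, I only need a \emph{one-sided} estimate on each ($\ell$ large and $d$ small), so a union bound over the two failure events suffices and the joint distribution never has to be understood.

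First I would control $\ell(w)$ via the Lehmer code $c_i(w)=\#\{j>i:w(j)<w(i)\}$, for which $\ell(w)=\sum_{i=1}^n c_i(w)$ with the $c_i(w)$ independent and $c_i(w)$ uniform on $\{0,1,\dots,n-i\}$. This gives $\mathbb{E}[\ell(w)]=\tfrac14 n(n-1)$ and $\mathrm{Var}[\ell(w)]=\sum_i\frac{(n-i+1)^2-1}{12}=O(n^3)$, so Chebyshev's inequality yields, for any fixed $\epsilon>0$, that $\Pr{\ell(w)\le \tfrac14 n^2-\epsilon n^2}=O(1/(\epsilon^2 n))\to 0$.

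Next, and this is the crux, I would control $d(w)$. Writing $q=w^{-1}$ (again uniform) and unwinding the definition of $J(w)$, membership $i\in J(w)$ says exactly $q(i+1)\le i-1$; reindexing by $v=i+1$ gives $d(w)=\#\{2\le v\le n:\ q(v)\le v-2\}$. With $Y_v=\mathbf{1}[q(v)\le v-2]$ and $d(w)=\sum_{v=2}^n Y_v$, uniformity of $q(v)$ gives $\Pr{Y_v=1}=(v-2)/n$, hence $\mathbb{E}[d(w)]=\tfrac1n\sum_{v=2}^n(v-2)=\tfrac{(n-1)(n-2)}{2n}=\tfrac{n}{2}-\tfrac32+o(1)$. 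For the variance, a direct computation from the joint law of $(q(u),q(v))$ shows $\mathrm{Cov}(Y_u,Y_v)=(u-2)\,\frac{v-n-2}{n^2(n-1)}\le 0$ for $u<v$ (these threshold events are negatively dependent), so $\mathrm{Var}[d(w)]\le \sum_v \mathrm{Var}[Y_v]\le n$, and Chebyshev gives $\Pr{d(w)\ge \tfrac{n}{2}+\epsilon n}=O(1/(\epsilon^2 n))\to 0$. The only genuine obstacle here is the dependence among the $Y_v$; establishing the negativity of the pairwise covariances (or, alternatively, invoking negative association for such order-statistic events) is the step that needs care, but it reduces the variance to the diagonal term.

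Finally I would combine the two estimates. Fix $\epsilon=1/100$. On the complement of the two failure events, which has probability $1-o(1)$ by the union bound, we have $\ell(w)\ge \tfrac14 n^2-\epsilon n^2$ and $d(w)\le \tfrac12 n+\epsilon n$, whence $\binom{d(w)+1}{2}\le \tfrac12\bigl(\tfrac12 n+\epsilon n+1\bigr)^2=\tfrac18 n^2+\tfrac{\epsilon+\epsilon^2}{2}n^2+O(n)$. For all large $n$ this gives $n+\binom{d(w)+1}{2}\le\bigl(\tfrac18+0.006\bigr)n^2<\bigl(\tfrac14-\epsilon\bigr)n^2\le\ell(w)$, since $0.131<0.24$. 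Thus $w$ is not proper on this event, and therefore $\Pr{w\text{ is proper}}\to 0$, as claimed.
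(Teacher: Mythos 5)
Your proof is correct, and it takes a genuinely different route from the paper's. The paper runs the second-moment method on the single random variable $X = L - \binom{D+1}{2}$: it computes $\E{X}$ exactly, then shows $\E{X^2} = n^4/64 + o(n^4)$, which forces it to estimate the joint moments $\E{L^2}$, $\E{D^4}$, $\E{LD^2}$, $\E{LD}$ by splitting each sum according to whether the participating indices are distinct; Chebyshev applied to $X$ with threshold $t = \E{X} - n$ then finishes. You instead decouple the two statistics --- one-sided concentration for $\ell(w)$ and for $d(w)$ separately, combined by a union bound --- which, as you rightly stress, eliminates any need for cross-moments or for understanding the joint law of $(\ell(w), d(w))$. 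Your use of the Lehmer code gives independence (hence $\V{\ell(w)} = O(n^3)$) for free, which is tidier than the paper's pairwise-indicator computation of $\E{L^2}$; and your covariance formula $\mathrm{Cov}(Y_u, Y_v) = (u-2)(v-n-2)/\bigl(n^2(n-1)\bigr) \le 0$ for $u < v$ checks out (the number of admissible value pairs is $(u-2)(v-2)-(u-2) = (u-2)(v-3)$), so the variance of $d(w)$ does reduce to the diagonal term. Your argument is more modular and yields an explicit $O(1/n)$ rate, whereas the paper as written concludes only $o(1)$; in exchange, the paper's approach produces the exact mean of the statistic $\ell(w) - \binom{d(w)+1}{2}$ itself. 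One point worth noting: you unwound the paper's literal definition of $J(w)$, namely $i \in J(w) \iff w^{-1}(i+1) \le i-1$, whereas the paper's own proof models $\#J(w)$ as the number of descents of $w^{-1}$, i.e.\ $\#\{i : w^{-1}(i+1) < w^{-1}(i)\}$; these two sets differ in general, but both counts have mean $\sim n/2$ and concentrate, so both arguments are insensitive to which reading is intended.
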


Proposition~\ref{prop:Schubsphericalisgood} and Theorem~\ref{thm:main2} combined imply:
\begin{corollary}
\label{thm:main1}
\[\lim_{n\to\infty} {\rm Pr}[w\in S_n, X_w \text{\ is $L_{I}$-spherical for some $I\subseteq J(w)$}]\longrightarrow 0.\] 
In particular,
\[\lim_{n\to\infty} {\rm Pr}[w\in S_n, X_w \text{\ is maximally spherical}]\longrightarrow 0.\] 
\end{corollary}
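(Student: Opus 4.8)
The plan is to obtain Corollary~\ref{thm:main1} as an immediate consequence of Proposition~\ref{prop:Schubsphericalisgood} and Theorem~\ref{thm:main2}. Indeed, by the Proposition the event that $X_w$ is $L_I$-spherical for some $I\subseteq J(w)$ is contained in the event $\{w\text{ is proper}\}$ (the maximally spherical case being the instance $I=J(w)$), so its probability is at most $\Pr{w\text{ is proper}}$, which tends to $0$ by Theorem~\ref{thm:main2}. Everything therefore rests on Theorem~\ref{thm:main2}, which I would prove by a second-moment concentration argument: I will show that a uniform $w\in S_n$ has $\ell(w)-\binom{d(w)+1}{2}$ of order $n^2$ with probability tending to $1$, hence exceeding $n$, so that $w$ fails to be proper with probability tending to $1$.

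Two statistics control the sign of $\ell(w)-\binom{d(w)+1}{2}$. The inversion number $\ell(w)$ is classical, with $\E{\ell(w)}=\tfrac14 n(n-1)$ and $\V{\ell(w)}=\Theta(n^3)$, so Chebyshev's inequality gives $\ell(w)\ge(\tfrac14-\delta)n^2$ with probability tending to $1$ for every fixed $\delta>0$. For $d(w)=\#J(w)$, write $X_i=\mathbf{1}[i\in J(w)]$; the definition of $J(w)$ says $i\in J(w)$ precisely when the value $i+1$ sits in one of the positions $1,\dots,i-1$, so $\Pr{i\in J(w)}=(i-1)/n$ and, by linearity, $\E{d(w)}=\tfrac1n\sum_{i=1}^{n-1}(i-1)=\tfrac{(n-1)(n-2)}{2n}=\tfrac n2+O(1)$.

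The crux is an upper-tail bound on $d(w)$. Each $X_i$ equals $\sum_{j<i}\mathbf{1}[w(j)=i+1]$, a monotone function of the entries in column $i+1$ of the permutation matrix of $w$, and distinct $X_i$ read off distinct columns. Since the entries of a uniform random permutation matrix are negatively associated, so are the $X_i$, whence $\V{d(w)}\le\sum_i\V{X_i}\le n/4$; alternatively one checks directly that $\mathrm{Cov}(X_i,X_{i'})=O(1/n)$, giving $\V{d(w)}=O(n)$. Chebyshev then yields $d(w)\le(\tfrac12+\delta)n$ with probability tending to $1$, so that $\binom{d(w)+1}{2}\le\tfrac12(\tfrac12+\delta)^2 n^2+O(n)$ on this event.

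Finally I would intersect the two high-probability events and bound $\Pr{w\text{ is proper}}$ by the union of their complements. On the intersection, $\ell(w)-\binom{d(w)+1}{2}\ge\big[(\tfrac14-\delta)-\tfrac12(\tfrac12+\delta)^2\big]n^2-O(n)$, and at $\delta=0$ the bracket equals $\tfrac18>0$; hence for a small enough fixed $\delta$ it stays positive and $\ell(w)-\binom{d(w)+1}{2}\ge cn^2>n$ for all large $n$, so $w$ is not proper. I expect the main obstacle to be the concentration of $d(w)$: one must pin its mean at $\approx n/2$ and prevent it from drifting up toward the critical value $n/\sqrt2$, beyond which $\binom{d(w)+1}{2}$ would overtake $\ell(w)\approx n^2/4$ and properness could survive. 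The comfortable gap between $\tfrac12$ and $\tfrac1{\sqrt2}$—equivalently $\tfrac18<\tfrac14$—is exactly the slack that makes a crude variance bound, rather than sharp concentration, sufficient.
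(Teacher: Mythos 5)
Your reduction of the corollary is exactly the paper's: by Proposition~\ref{prop:Schubsphericalisgood}, the event that $X_w$ is $L_I$-spherical for some $I\subseteq J(w)$ is contained in the event that $w$ is proper (with the maximally spherical statement being the case $I=J(w)$), so both probabilities are bounded by $\Pr{w \text{ is proper}}$, which vanishes by Theorem~\ref{thm:main2}. Where you genuinely diverge is in how you would establish Theorem~\ref{thm:main2} itself. The paper runs a second-moment argument on the single random variable $X=L-\binom{D+1}{2}$: it computes $\E{X}$ exactly (Lemma~\ref{claim:1st-moment}) and $\E{X^2}=n^4/64+o(n^4)$ (Lemma~\ref{claim:2nd-moment}), the latter requiring the mixed moments $\E{L^2}$, $\E{D^4}$, $\E{LD^2}$, and then applies Chebyshev to $X$. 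You instead concentrate the two statistics separately --- $\ell(w)\geq(\tfrac14-\delta)n^2$ with high probability since $\V{\ell(w)}=\Theta(n^3)$, and $d(w)\leq(\tfrac12+\delta)n$ with high probability provided $\V{d(w)}=o(n^2)$ --- and intersect the two events. This decoupling is legitimate and arguably cleaner, since it avoids all cross-moment computations; both proofs ultimately rest on the same numerical fact $\tfrac14>\tfrac18$, and your choice of a small fixed $\delta$ handles the slack correctly.

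However, one of your two justifications for the key bound $\V{d(w)}=O(n)$ is false. The entries of a uniform random permutation matrix are \emph{not} negatively associated: entries in distinct rows and distinct columns are positively correlated, e.g.
\begin{equation*}
\mathrm{Cov}\bigl(\mathbf{1}[w(1)=1],\,\mathbf{1}[w(2)=2]\bigr)=\frac{1}{n(n-1)}-\frac{1}{n^2}>0,
\end{equation*}
so NA of the matrix entries cannot be the source of $\V{d(w)}\leq\sum_i\V{X_i}$. Fortunately your argument survives via either of two repairs. First, your fallback direct computation is correct: with $X_i=\mathbf{1}[w^{-1}(i+1)\leq i-1]$ one finds, for $i\neq i'$,
\begin{equation*}
\mathrm{Cov}(X_i,X_{i'})=\frac{(i-1)(i'-1)}{n^2(n-1)}-\frac{\min(i,i')-1}{n(n-1)}\leq\frac{1}{n-1},
\end{equation*}
so summing over the $O(n^2)$ pairs and adding $\sum_i \V{X_i}\leq n/4$ still gives $\V{d(w)}=O(n)$, which is all you need. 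Second, if you want negative association, invoke it for the \emph{values} of a random permutation (Joag-Dev--Proschan): the vector $(w^{-1}(2),\ldots,w^{-1}(n))$ is NA, each $X_i$ is a decreasing function of the single coordinate $w^{-1}(i+1)$, and monotone functions of disjoint coordinate sets of an NA vector are again NA. With either patch your route is a complete and correct alternative to the paper's; the same bounds also go through if one reads $J(w)$ as the left-descent set $\{i: w^{-1}(i+1)<w^{-1}(i)\}$ used in Section~\ref{sec:2}, where the $X_i$ at distance at least $2$ are independent and adjacent ones are negatively correlated.
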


Theorem~\ref{thm:main2} resolves a conjecture from \cite{Hodges.Yong}. In \emph{ibid.}, the second and
third authors introduced the notion of permutation $w\in S_n$ being \emph{$I$-spherical}; in the case $I=J(w)$ we call $w\in S_n$
\emph{maximally spherical}. This combinatorial definition (recapitulated
in Section~\ref{sec:3}) conjecturally characterizes those $w\in S_n$ such that $X_w$ is $L_I$-spherical. Proposition~\ref{prop:Isphericalisgood} shows that if $w\in S_n$ is $I$-spherical, then $w$ is proper. That proposition, together with
Theorem~\ref{thm:main2}, confirms \cite[Conjecture~3.7]{Hodges.Yong}:

\begin{corollary}
\label{thm:main3}
\[\lim_{n\to\infty} {\rm Pr}[w\in S_n, w \text{\ is $I$-spherical for some $I\subseteq J(w)$}]\longrightarrow 0.\] 
Therefore,
\[\lim_{n\to\infty} {\rm Pr}[w\in S_n, w \text{\ is maximally spherical}]\longrightarrow 0.\] 
\end{corollary}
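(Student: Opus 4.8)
The plan is to deduce Corollary~\ref{thm:main3} directly from two results already in hand: Proposition~\ref{prop:Isphericalisgood}, which asserts that $I$-sphericality implies properness, and Theorem~\ref{thm:main2}, which asserts that properness has vanishing probability in the limit. The entire argument reduces to a containment of events combined with monotonicity of probability; no new combinatorics is needed.

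First I would fix $n$ and introduce the two events $A_n = \{w \in S_n : w \text{ is $I$-spherical for some } I \subseteq J(w)\}$ and $P_n = \{w \in S_n : w \text{ is proper}\}$. By Proposition~\ref{prop:Isphericalisgood}, every $w$ that is $I$-spherical for some $I \subseteq J(w)$ is proper, so $A_n \subseteq P_n$. Monotonicity of probability then yields $\Pr{w \in A_n} \le \Pr{w \in P_n}$, and letting $n \to \infty$, Theorem~\ref{thm:main2} forces the right-hand side, and hence the left-hand side, to $0$. This establishes the first displayed limit.

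For the second displayed limit, I would observe that ``$w$ is maximally spherical'' is by definition the assertion that $w$ is $I$-spherical for the particular choice $I = J(w)$. Since $J(w) \subseteq J(w)$ is an admissible witness, the maximally spherical permutations form a subset of $A_n$. The event is therefore contained in $A_n$, and a fortiori in $P_n$, so the same monotonicity estimate (or simply the inclusion into the first event) gives the second limit at once.

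I do not expect any genuine obstacle at this stage: the only content here is recognizing the two event inclusions and invoking monotonicity. All the substantive work lies upstream---in establishing the combinatorial implication of Proposition~\ref{prop:Isphericalisgood} and in the asymptotic estimate of Theorem~\ref{thm:main2}---both of which this corollary takes as given.
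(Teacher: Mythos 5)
Your proposal is correct and matches the paper's own proof, which likewise deduces Corollary~\ref{thm:main3} immediately from Theorem~\ref{thm:main2} together with Proposition~\ref{prop:Isphericalisgood}; you have merely spelled out the event inclusions and monotonicity of probability that the paper leaves implicit. Your handling of the second limit via $I = J(w)$ being an admissible witness is also exactly the intended reading of ``maximally spherical.''
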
 

Since Corollary~\ref{thm:main3} is consistent with Corollary~\ref{thm:main1}, the former provides additional evidence for the aforementioned
conjectural characterization.

\section{Proof of Theorem~\ref{thm:main2}}\label{sec:2}

For $w\in S_n$, define 
\[\cE_{ij}= \text{\  the event
$\{\iv{w}(i) > \iv{w}(j)\}$.}\]
Let $X_{ij}$ be the indicator for $\cE_{ij}$; that is, $X_{ij}=1$ if event
$\cE_{ij}$ happens and $X_{ij}=0$ otherwise.
Then if $w$ is chosen from $S_n$ uniformly at random, then: \[
  \E{X_{ij}}=\Pr{X_{ij} = 1} = \frac1{2!}(1-\delta_{i,j}) = 1-\Pr{X_{ij}=0}.
\]
Since $\ell(w)=\ell(w^{-1})$ and $\#J(w)=\#\{i:w^{-1}(i+1)<w^{-1}(i)\}$, 
the random variable (r.v.) $\ell(w)-{{d(w)+1} \choose 2}$ can be modeled
as the r.v. 
\[X:=L-{{D+1}\choose 2},\] 
where:
\begin{align*}
  L &= \sum_{i=1}^n\sum_{j=i+1}^n X_{ij},\\
  \text{and } D &= \sum_{i=1}^{n-1}X_{i,i+1}.
\end{align*}
Notice that if $i_1,i_2,i_3,i_4 \in [n]$ are distinct, then $X_{i_1,i_2}$ and
$X_{i_3,i_4}$ are independent.

\begin{lemma}\label{claim:1st-moment}
For $n\geq 2$,
 \[\E{X} = \frac{3n^2-7n+2}{24}.\]
\end{lemma}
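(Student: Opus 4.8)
The plan is to compute $\E{X}$ by linearity of expectation, treating $L$, $D$, and $\binom{D+1}{2}$ separately. The mean of $L$ is immediate: since $\E{X_{ij}}=\tfrac12$ for each of the $\binom{n}{2}$ pairs with $i<j$, we get $\E{L}=\tfrac12\binom{n}{2}=\tfrac{n(n-1)}{4}$. Similarly $\E{D}=\tfrac{n-1}{2}$, though what we actually need is $\E{\binom{D+1}{2}}=\tfrac12\E{D^2+D}=\tfrac12\bigl(\E{D^2}+\E{D}\bigr)$, so the real work is computing the second moment $\E{D^2}$.

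First I would expand $D^2=\bigl(\sum_{i=1}^{n-1}X_{i,i+1}\bigr)^2=\sum_i X_{i,i+1}^2+\sum_{i\neq i'}X_{i,i+1}X_{i',i'+1}$. Since each $X_{i,i+1}$ is a $0/1$ indicator, $X_{i,i+1}^2=X_{i,i+1}$, contributing $\E{D}=\tfrac{n-1}{2}$ to the diagonal. For the off-diagonal terms I would split according to whether the index pairs $\{i,i+1\}$ and $\{i',i'+1\}$ are disjoint or overlap. When they are disjoint (i.e.\ $|i-i'|\geq 2$), the four indices $i,i+1,i',i'+1$ are distinct, so by the independence noted in the excerpt $\E{X_{i,i+1}X_{i',i'+1}}=\tfrac14$. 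The only overlapping off-diagonal cases are the adjacent pairs with $|i-i'|=1$, i.e.\ terms of the form $X_{i,i+1}X_{i+1,i+2}$ (and their transposes), where the three indices $i,i+1,i+2$ are distinct but the two events share the index $i+1$.

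The key step, and the one I expect to be the main obstacle, is evaluating this overlapping expectation $\E{X_{i,i+1}X_{i+1,i+2}}=\Pr{\iv{w}(i)>\iv{w}(i+1)\text{ and }\iv{w}(i+1)>\iv{w}(i+2)}$. Here the two indicator events are \emph{not} independent because they share $\iv w(i+1)$; instead this is the probability that a uniformly random arrangement of the three values $\iv w(i),\iv w(i+1),\iv w(i+2)$ is strictly decreasing, which is $\tfrac{1}{3!}=\tfrac16$ rather than $\tfrac14$. So I would carefully bookkeep the counts: there are $2(n-2)$ ordered adjacent-overlap pairs each contributing $\tfrac16$, and $(n-1)(n-2)-2(n-2)$ ordered disjoint pairs each contributing $\tfrac14$, where $(n-1)(n-2)$ is the total number of ordered off-diagonal pairs.

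Assembling these pieces gives $\E{D^2}$ as an explicit quadratic in $n$, whence $\E{\binom{D+1}{2}}=\tfrac12(\E{D^2}+\E{D})$. Finally I would combine $\E{X}=\E{L}-\E{\binom{D+1}{2}}$, substitute the values $\E{L}=\tfrac{n(n-1)}{4}$ and the computed second moment, and simplify the resulting polynomial in $n$ to confirm it equals $\tfrac{3n^2-7n+2}{24}$. A sanity check at $n=2$ (where $X=X_{12}-\binom{X_{12}+1}{2}=X_{12}-X_{12}=0$, matching $\tfrac{12-14+2}{24}=0$) would confirm the constant term and guard against arithmetic slips in the overlap bookkeeping.
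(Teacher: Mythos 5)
Your proposal is correct and follows essentially the same route as the paper's proof: linearity of expectation for $\E{L}$ and $\E{D}$, then computing $\E{D^2}$ by splitting into diagonal terms, adjacent-overlap pairs with $\E{X_{i,i+1}X_{i+1,i+2}}=\tfrac{1}{3!}$, and disjoint independent pairs contributing $\tfrac14$, with identical counts ($2(n-2)$ ordered overlapping pairs versus the rest). The paper performs exactly this bookkeeping, so your argument matches it step for step.
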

\begin{proof}
  It is true that:
  \begin{itemize}
    \item[(a)] $(X_{i,j})_{i<j}$ are identically distributed,
    \item[(b)] $\E{X_{i,i+1}X_{i,i+1}} = \E{X_{i,i+1}^2} = \E{X_{i,i+1}}=1/2$ since
      $X_{i,i+1}$ is an indicator r.v.,
    \item[(c)] $\E{X_{i,i+1}X_{i+1,i+2}}=\Pr{\iv{w}(i) > \iv{w}(i+1) > \iv{w}(i+2)}=\frac{1}{3!}$,
    \item[(d)] $X_{i,i+1}$ and $X_{j,j+1}$ are independent if $i+1 < j$.
  \end{itemize}
  With this, the expression $\E{L}$ can be expanded as:
     \begin{align*}
    \E{L}
    &=\E{\sum_{i=1}^n\sum_{j=i+1}^n X_{ij}} \\
    &=\sum_{i=1}^n\sum_{j=i+1}^n\E{X_{ij}}&\text{lin. of expectation} \\
    &=\frac{1}{2}{n\choose 2}&\text{identically distributed}. \\
  \end{align*}
  Similarly, 
  \[\E{D}=\frac{n-1}2.\]
  Next, the expression $\E{D^2}$ can be expanded as:
  \begin{align*}
    \E{D^2}
    &=\E{\left(\sum_{i=1}^{n-1}X_{i,i+1}\right)^2}\\
    &=\E{\sum_{i=1}^{n-1}X_{i,i+1}^2 + \sum_{i=1}^{n-1}\sum_{j\neq i}X_{i,i+1}X_{j,j+1}}\\
        &=\sum_{i=1}^{n-1}\E{X_{i,i+1}^2} + \sum_{i=1}^{n-1}\sum_{j\neq i}\E{X_{i,i+1}X_{j,j+1}}&\text{lin. of expectation}\\
     &=\frac{n-1}2 + \sum_{i=1}^{n-1}\sum_{j\neq i}\E{X_{i,i+1}X_{j,j+1}}&\text{by (b)}\\
      &=\frac{n-1}2 + 2\sum_{i=1}^{n-1}\sum_{j=i+1}^{n-1}\E{X_{i,i+1}X_{j,j+1}}\\
      &=\frac{n-1}2 + 2\left(\sum_{i=1}^{n-2}\E{X_{i,i+1}X_{i+1,i+2}} + \sum_{i=1}^{n-1}\sum_{j=i+2}^{n-1}\E{X_{i,i+1}X_{j,j+1}}\right)\\
    &=\frac{n-1}2 + 2\left(\frac{n-2}{3!} + \frac{1}{2^2}\left({{n-1}\choose 2}-(n-2)\right)\right)&\text{by (c) and (d))}\\
    &=\frac{n-1}2 + \frac{n-2}{3} + \frac{1}{2}\left({{n-1}\choose 2}-(n-2)\right).
  \end{align*}
  Thus by linearity of expectation, 
  \[\E{{{D+1}\choose 2}}=\frac12\E{D^2+D} = \frac{n-1}{2}+\frac{n-2}{6} -\frac{n-2}{4}+\frac14{{n-1}\choose 2}\] and 
  \begin{align*}
    \E{X}
    &=\E{L-{{D+1}\choose 2}}\\
    &=\E{L} - \E{{{D+1}\choose 2}}\\
    &= \frac{3n^2-7n+2}{24}.\qedhere
  \end{align*}
\end{proof}

\begin{lemma}\label{claim:2nd-moment}
  \[\E{X^2} = \frac{n^4}{64} + o(n^4).\]
\end{lemma}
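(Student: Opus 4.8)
The plan is to avoid expanding $\E{X^2}$ directly and instead reduce the whole statement to a variance estimate. Write $\E{X^2} = (\E{X})^2 + \V{X}$. By Lemma~\ref{claim:1st-moment}, $(\E{X})^2 = \left(\frac{3n^2-7n+2}{24}\right)^2 = \frac{n^4}{64} + o(n^4)$, so the lemma follows once I show $\V{X} = o(n^4)$; in fact I aim for the stronger bound $\V{X} = O(n^3)$. Since $X = L - \binom{D+1}{2}$, the inequality $\V{A-B} \le \left(\sqrt{\V{A}} + \sqrt{\V{B}}\right)^2$ (Cauchy--Schwarz applied to the covariance) reduces this to proving $\V{L} = O(n^3)$ and $\V{\binom{D+1}{2}} = O(n^3)$ separately.

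For the first, expand $\V{L} = \sum_{i<j}\sum_{k<l}\mathrm{Cov}(X_{ij}, X_{kl})$. By the independence noted just before the lemma, $\mathrm{Cov}(X_{ij}, X_{kl}) = 0$ whenever $\{i,j\}$ and $\{k,l\}$ are disjoint. The overlapping pairs of index-pairs involve at most three distinct indices, so there are $O(n^3)$ of them, and each covariance is bounded by $1$ in absolute value; hence $\V{L} = O(n^3)$. For the second, use $\V{\binom{D+1}{2}} = \frac14\V{D^2 + D}$ and record two facts: $D = \sum_{i=1}^{n-1} X_{i,i+1}$ is a sum of a $1$-dependent sequence of indicators (since $X_{i,i+1}$ and $X_{j,j+1}$ are independent once $|i-j|\ge 2$), and the computation in Lemma~\ref{claim:1st-moment} yields $\V{D} = \E{D^2} - (\E{D})^2 = \frac{n+1}{12} = O(n)$. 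Writing $\mu = \E{D}$ and $\delta = D - \mu$, I would bound $\V{D^2} = \V{2\mu\delta + \delta^2} \le 8\mu^2 \E{\delta^2} + 2\E{\delta^4}$, where $8\mu^2\E{\delta^2} = O(n^2)\cdot O(n) = O(n^3)$.

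The crux is therefore the fourth central moment bound $\E{\delta^4} = O(n^2)$. Expanding $\delta^4 = \sum_{a,b,c,d} Y_a Y_b Y_c Y_d$ with $Y_i = X_{i,i+1} - \tfrac12$ (so $\E{Y_i} = 0$), a term survives only if every one of the four indices lies within distance $1$ of another index of the tuple; otherwise a factor $\E{Y_i} = 0$ kills it. Such surviving tuples split into at most two nearby clusters, so there are only $O(n^2)$ of them, each contributing $O(1)$. This gives $\E{\delta^4} = O(n^2)$, hence $\V{D^2} = O(n^3)$; combining with $\V{D} = O(n)$ yields $\V{\binom{D+1}{2}} = \tfrac14\V{D^2+D} = O(n^3)$.

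I expect the fourth-moment estimate $\E{\delta^4} = O(n^2)$ to be the main obstacle: it is the only step that genuinely exploits the local ($1$-dependent) structure of the descent indicators rather than mere pairwise independence, and one must argue carefully that the isolated-index terms vanish while the surviving ``two-cluster'' terms number only $O(n^2)$. Once this is in place, assembling $\V{X} = O(n^3) = o(n^4)$ and adding $(\E{X})^2 = \frac{n^4}{64} + o(n^4)$ completes the proof.
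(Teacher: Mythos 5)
Your proposal is correct, and it takes a genuinely different route from the paper. The paper expands $\E{X^2}=\E{L^2}+\E{\binom{D+1}{2}^2}-2\E{L\binom{D+1}{2}}$, discards the lower-order pieces, and computes the exact leading coefficients of the three fourth-moment quantities $\E{L^2}$, $\E{D^4/4}$, $\E{LD^2}$ (each via the ``all indices distinct'' terms, which factor by independence and dominate); the answer $\tfrac{1}{64}$ then emerges from the cancellation $\tfrac1{16}+\tfrac1{64}-\tfrac1{16}$. You instead write $\E{X^2}=(\E{X})^2+\V{X}$, so the leading term $\tfrac{n^4}{64}$ comes for free by squaring Lemma~\ref{claim:1st-moment}, and the entire problem reduces to the qualitative bound $\V{X}=O(n^3)$, which you obtain from the local dependence structure: disjoint index pairs give vanishing covariances for $L$, the descent indicators are $1$-dependent, and the crux is the fourth central moment bound $\E{(D-\E{D})^4}=O(n^2)$, proved by the standard observation that any tuple containing an isolated index contributes zero (this uses that $X_{a,a+1}$ is independent of the joint law of the indicators on positions disjoint from $\{a,a+1\}$, not merely pairwise independence --- but the paper's own factorizations of fourth moments use mutual independence at exactly the same level, so this is fair). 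Your route buys something concrete: the bound $\V{X}=O(n^3)$ is stronger than the $o(n^4)$ the paper extracts, and when fed into the Chebyshev step of the following lemma it gives a rate, $\Pr{X\leq n}=O(1/n)$, rather than just convergence to $0$; it also explains structurally why the paper's leading terms must cancel ($X$ is concentrated, so $\E{X^2}$ is dominated by $(\E{X})^2$). The trade-off is that your argument leans on slightly heavier probabilistic machinery (central moments of $1$-dependent sums), whereas the paper's is a purely mechanical count.
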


\begin{proof}
\noindent
Notice that: \begin{align*}
  \E{X^2}
  &=\E{L^2}+\E{{{D+1} \choose 2}^2}-2\E{L{{D+1}\choose 2}} \\
  &=\E{L^2}+\frac14\left(\E{D^4}+2\E{D^3}+\E{D^2}\right)-\E{LD^2}-\E{LD}.
\end{align*}
Now, $0\leq D^3, D^2, LD\leq n^3$, so $\E{D^3},\E{D^2},\E{LD}= o(n^4)$.
Thus it suffices to study the asymptotics of $\E{L^2}, \E{D^4/4}, \E{LD^2}$.

We will repeatedly use the following observation. For a set $S$ with $|S| = o(f(n))$: 
\begin{equation}
\label{eqn:settrick}
  \sum_{(i_1,j_1,\ldots,i_c,j_c)\in S}\E{\prod_{k=1}^cX_{i_k,j_k}}\leq |S| = o(f(n)).
\end{equation}

Expanding $\E{L^2}$ gives: \[
      \E{L^2} = \sum_{i=1}^n\sum_{j=i+1}^n\sum_{i'=1}^n\sum_{j'=i'+1}^n\E{X_{i,j}X_{i',j'}}.
    \]
    There are ${n\choose 2}^2 = n^4/4 + o(n^4)$ many terms
    in this summation.
    Further, there are ${n\choose 2}{{n-2}\choose 2}=n^4/4 + o(n^4)$ many terms
    in this summation such that $i,j,i',j'$ are distinct.
    Therefore, there must be $o(n^4)$ terms where $i,j,i',j'$ are \emph{not}
    distinct.    Now, 
    \begin{align*}
    &  \ \ \ \ \ \sum_{\text{distinct }i<j,i'<j' \in [n]}\E{X_{i,j}X_{i',j'}}\\
     &= \sum_{\text{distinct }i<j,i'<j' \in [n]}\E{X_{i,j}}\E{X_{i',j'}}&\text{(independence when indices are distinct)} \\
      &=\left(\frac12\right)^2 {n\choose 2}{{n-2}\choose 2} \\
      &=\left(\frac12\right)^2(n^4/4 + o(n^4)).
    \end{align*}
Combining this with (\ref{eqn:settrick}) gives
\begin{equation}
\label{eqn:first}
\E{L^2}=\frac{1}{16}n^4+o(n^4).
\end{equation}

  To expand $\E{D^4/4}$, first we have \[
      \E{D^4} = \sum_{i=1}^{n-1}\sum_{j=1}^{n-1}\sum_{i'=1}^{n-1}\sum_{j'=1}^{n-1}\E{X_{i,i+1}X_{j,j+1}X_{i',i'+1}X_{j',j'+1}}.
    \]
    There are $(n-1)^4 = n^4 + o(n^4)$ many terms in this
    summation.
    Further, there are $4!{{n-4}\choose 4}=n^4 + o(n^4)$ many terms
    in this summation such that $i,i+1,j,j+1,i',i'+1,j',j'+1$ are distinct.
    Here we have used the fact that there are ${{n-k}\choose k}$ ways to choose
    $k$ non-consecutive numbers from $[n-1]$.
    Therefore, there must be $o(n^4)$ terms where $i,i+1,j,j+1,i',i'+1,j',j'+1$
    are \emph{not} distinct.    We compute
    \begin{align*}
      & \ \ \ \ \ \frac14\cdot\sum_{\substack{i,j,i',j' \in [n]\\i,i+1,j,j+1,i',i'+1,j',j'+1\text{ are distinct}}}\E{X_{i,i+1}X_{j,j+1}X_{i',i'+1}X_{j',j'+1}}\\
      &= \frac14\cdot\sum_{\substack{i,j,i',j' \in [n]\\i,i+1,j,j+1,i',i'+1,j',j'+1\text{ are distinct}}}\E{X_{i,i+1}}\E{X_{j,j+1}}\E{X_{i',i'+1}}\E{X_{j',j'+1}}\\
      &= \frac14\cdot\left(\frac12\right)^4\cdot4!{{n-4}\choose 4} \\
      &= \frac14\cdot\left(\frac12\right)^4(n^4 + o(n^4)). 
          \end{align*}
    Hence by (\ref{eqn:settrick}),
    \begin{equation}
    \label{eqn:second}
    \E{D^4/4}=\frac{1}{64}n^4+o(n^4).
    \end{equation}

   Expanding $\E{LD^2}$ gives: \[
      \E{LD^2} = \sum_{i=1}^n\sum_{j=i+1}^n\sum_{i'=1}^{n-1}\sum_{j'=1}^{n-1}\E{X_{i,j}X_{i',i'+1}X_{j',j'+1}}.
    \]
    There are ${n\choose 2}(n-1)^2 = n^4/2 + o(n^4)$ many
    terms in this summation.
    Further, there are $2!{{n-2}\choose 2}{{n-4}\choose 2}=n^4/2 + o(n^4)$ many
    terms such that $i,j,i',i'+1,j',j'+1$ are distinct.
    This can be seen by first choosing $i'$ and $j'$, and then choosing the pair
    $(i,j)$ such that $i < j$.
    Therefore, there must be $o(n^4)$ terms where $i,j,i',i'+1,j',j'+1$ are
    \emph{not} distinct.    We have:
    \begin{align*}
      & \ \ \ \ \  \sum_{\substack{i<j,i',j' \in [n]\\i,j,i',i'+1,j',j'+1\text{ are distinct}}}\E{X_{i,j}X_{i',i'+1}X_{j',j'+1}}\\
            &= \sum_{\substack{i<j,i',j' \in [n]\\i,j,i',i'+1,j',j'+1\text{ are distinct}}}\E{X_{i,j}}\E{X_{i',i'+1}}\E{X_{j',j'+1}}
            \end{align*}
            \begin{align*}
                       &= \left(\frac12\right)^3\cdot 2!{{n-2}\choose 2}{{n-4}\choose 2} \\
      &= \left(\frac12\right)^3\cdot(n^4/2 + o(n^4)).
    \end{align*}
    Therefore by (\ref{eqn:settrick}), 
    \begin{equation}
    \label{eqn:third}
    \E{LD^2}=\frac{1}{16}n^4+o(n^4).
    \end{equation}

Summarizing, we have shown that
\[
  \E{X^2}=
  \E{L^2} + \E{D^4/4} - \E{LD^2} +o(n^4).\]
  Now the result follows from (\ref{eqn:first}), (\ref{eqn:second}), (\ref{eqn:third}).
\end{proof}

\begin{lemma}
 $\lim_{n\to \infty}\Pr{X \leq n}\longrightarrow 0$.
\end{lemma}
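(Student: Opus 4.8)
The plan is to use the second moment method via Chebyshev's inequality. The two preceding lemmas are engineered precisely so that the leading terms of $\E{X^2}$ and $\E{X}^2$ agree, which forces the variance to be of strictly lower order; once this cancellation is in hand, Chebyshev's inequality handles the rest.

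First I would compute the variance. From Lemma~\ref{claim:1st-moment}, $\E{X}=\frac{3n^2-7n+2}{24}$, so that
\[
\E{X}^2 = \frac{9n^4}{576}+o(n^4) = \frac{1}{64}n^4 + o(n^4).
\]
From Lemma~\ref{claim:2nd-moment}, $\E{X^2}=\frac{1}{64}n^4+o(n^4)$. Subtracting, the leading $\frac{1}{64}n^4$ terms cancel and we obtain
\[
\V{X} = \E{X^2}-\E{X}^2 = o(n^4).
\]

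Next I would observe that $\E{X}=\frac{1}{8}n^2+o(n^2)$ dwarfs the threshold $n$, so for all sufficiently large $n$ we have $\E{X}-n>0$ and in fact $\E{X}-n=\frac{1}{8}n^2+o(n^2)$. The event $\{X\leq n\}$ therefore forces $X$ to fall below its mean by at least $\E{X}-n$, which gives
\[
\Pr{X\leq n} = \Pr{\E{X}-X \geq \E{X}-n} \leq \Pr{|X-\E{X}| \geq \E{X}-n} \leq \frac{\V{X}}{(\E{X}-n)^2},
\]
the final step being Chebyshev's inequality.

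Finally, since $(\E{X}-n)^2=\frac{1}{64}n^4+o(n^4)$ while $\V{X}=o(n^4)$, the right-hand side is of the form $o(n^4)/\!\left(\tfrac{1}{64}n^4+o(n^4)\right)$, which tends to $0$ as $n\to\infty$; this yields the claim. The only genuinely delicate point in the argument is the cancellation producing $\V{X}=o(n^4)$, and this is exactly why the second moment was computed to leading order in Lemma~\ref{claim:2nd-moment}. Everything downstream is a routine application of Chebyshev's inequality once the variance is known to be sub-$n^4$.
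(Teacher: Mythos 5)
Your proposal is correct and follows essentially the same route as the paper: Chebyshev's inequality with threshold $t=\E{X}-n$, combined with the cancellation of the leading $\tfrac{1}{64}n^4$ terms of $\E{X^2}$ and $(\E{X})^2$ from the two moment lemmas. The only cosmetic difference is that you package the cancellation as $\V{X}=o(n^4)$ before invoking Chebyshev, while the paper applies Chebyshev first and then substitutes the asymptotics.
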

\begin{proof}
  The event $\{X\leq n\}$ is contained in the event $\{|X-\E X|\geq t\}$
  when $t=\E{X}-n$ because $|X-\E X|\geq t$ implies that either
  \begin{itemize}
    \item[(A)] $X-\E X \geq t$, or
    \item[(B)] $\E X - X \geq t$,
  \end{itemize}
  and the above choice of $t$ causes inequality (B) to be $X\leq n$.
  Now, we can apply Chebyshev's Inequality to $X$ and $t=\E X -n$ to
  get:
  \begin{align*}
    \Pr{X \leq n}
    &\leq \Pr{|X-\E{X}|\geq \E X-n} \\
    &\leq \frac{\V X}{(\E X-n)^2} \\
    &=\frac{\E{X^2}-(\E{X})^2}{(\E X-n)^2}.
  \end{align*}
  The result follows from the fact that, by Lemma~\ref{claim:2nd-moment},
  \[\E{X^2} = \frac{n^4}{64} + o(n^4)\]
  and by  Lemma~\ref{claim:1st-moment}, both
  \[(\E{X})^2 = \frac{n^4}{64} + o(n^4) \text{ \ and $(\E{X}-n)^2 = \gO(n^4)$}.\qedhere\]
\end{proof}
This completes the proof of Theorem~\ref{thm:main2}. \qed

\section{Properness is necessary for sphericality; proof of Corollaries~\ref{thm:main2} and~\ref{thm:main3}}\label{sec:3}

 Let $T$ be the maximal torus of diagonal matrices in $GL_n$. For $I \subseteq J(w)$, define 
\[B_I = L_I \cap B.\] Hence $B_I$ is the Borel subgroup of upper triangular matrices in $L_I$. For a positive integer $j$, let $U_j$ be the maximal unipotent subgroup of $GL_j$ consisting of upper triangular matrices with 1's on the diagonal. Then
\begin{equation}
\label{eq:dimUnipotent}
\dim{U_j} = {j \choose 2}.
\end{equation}
Let $U_I$ be the maximal unipotent subgroup of $B_I$. It is basic (see, \emph{e.g.}, \cite[Chapter~IV]{Borel}) that
\begin{equation}
\label{eq:formUnipotent}
U_{I} \cong U_{d_1-d_0}\times U_{d_2-d_1}\times \cdots \times U_{d_k-d_{k-1}} \times U_{d_{k+1}-d_k}.
\end{equation}

\begin{proposition}
\label{prop:Schubsphericalisgood}
If $X_w$ is $L_I$-spherical then $w$ is proper.
\end{proposition}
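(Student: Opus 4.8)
The plan is to run a dimension count, using the elementary fact that when a connected group $G$ acts on an irreducible variety $Y$ with a dense orbit, one has $\dim Y \le \dim G$ (the dense orbit is the image of an orbit map $G \to Y$, so its dimension is at most $\dim G$; being dense in the irreducible $Y$, it has dimension $\dim Y$). I would apply this with $Y = X_w$ and with $G$ a Borel subgroup of $L_I$, which we may take to be $B_I = L_I \cap B$. Recalling $\dim X_w = \ell(w)$, the hypothesis that $X_w$ is $L_I$-spherical then yields
\[ \ell(w) \le \dim B_I, \]
and everything reduces to bounding $\dim B_I$ from above by $n + \binom{d(w)+1}{2}$.

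Next I would compute $\dim B_I$ exactly. Since $B_I$ contains the full diagonal torus $T$ (of dimension $n$) and has unipotent radical $U_I$, we have $\dim B_I = n + \dim U_I$. The product decomposition (\ref{eq:formUnipotent}) together with (\ref{eq:dimUnipotent}) gives
\[ \dim U_I = \sum_{m=1}^{k+1} \binom{d_m - d_{m-1}}{2}, \]
so that $\dim B_I = n + \sum_{m=1}^{k+1}\binom{d_m - d_{m-1}}{2}$.

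Finally I would bound this sum combinatorially. The block sizes $a_m := d_m - d_{m-1}$ are positive integers summing to $n$, and there are $k+1 = n - |I|$ of them (since $|D| = (n-1) - |I|$). Because $x \mapsto \binom{x}{2}$ is convex, the sum $\sum_m \binom{a_m}{2}$ subject to these constraints is maximized by taking the parts as unequal as possible, namely $(a_1,\ldots,a_{k+1}) = (n-k,1,\ldots,1)$; as $n - k = |I| + 1$, this gives
\[ \sum_{m=1}^{k+1}\binom{a_m}{2} \le \binom{|I|+1}{2} \le \binom{d(w)+1}{2}, \]
where the last inequality uses $|I| \le \#J(w) = d(w)$, which holds because $I \subseteq J(w)$. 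Chaining the three displays yields $\ell(w) \le n + \binom{d(w)+1}{2}$, i.e. $\ell(w) - \binom{d(w)+1}{2} \le n$, which is exactly the statement that $w$ is proper.

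The only genuine content beyond bookkeeping is the input inequality $\ell(w) = \dim X_w \le \dim B_I$, which is where sphericality enters; the remaining work is the convexity estimate in the last step. I expect the two points to get right are the accounting that ties $k$, $|I|$, and $d(w)$ together (so that the extremal composition $(|I|+1,1,\ldots,1)$ is correctly identified, and $n-k = |I|+1$), and the observation that $B_I$ genuinely contains the entire torus $T$, so that the summand $n$ appears exactly once and with the right coefficient.
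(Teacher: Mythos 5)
Your proposal is correct and follows essentially the same argument as the paper: bound $\ell(w)=\dim X_w$ by the dimension of a Borel subgroup of $L_I$ (via the dense-orbit dimension count and conjugacy of Borel subgroups), decompose that Borel as torus plus unipotent part, and maximize $\sum_m \binom{d_m-d_{m-1}}{2}$ by convexity at the extreme composition. The only cosmetic difference is that the paper applies $I\subseteq J(w)$ at the group level, via $\dim B_I \leq \dim B_{J(w)}$, before the convexity estimate, whereas you apply it numerically at the end via $\binom{|I|+1}{2}\leq \binom{d(w)+1}{2}$; the two are equivalent.
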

\begin{proof}
Since $L_I$ acts spherically on $X_w$, by definition, there is a Borel subgroup $K \subset L_I$ such that $K$ has a dense orbit ${\mathcal O}$ in $X_w$. Thus \[\dim{X_w} = \dim{{\mathcal O}}.\] Let $x\in {\mathcal O}$. By \cite[Proposition 1.11]{Brion}, ${\mathcal O} = K \cdot x$ is a smooth, closed subvariety of $X_w$ of dimension $\dim{K} - \dim{K_x}$, where $K_x$ is the isotropy group of $x$. Hence
\begin{equation}
\label{eq:dimUpperBoundSPherical}
\dim{X_w} = \dim{{\mathcal O}} = \dim{K}-\dim{K_x} \leq \dim{K}.
\end{equation}

All Borel subgroups of a connected algebraic group are conjugate~\cite[\S 11.1]{Borel}, and so $\dim{K}=\dim{B_I}$. The fact that $L_I$ acts on $X_w$ implies $I \subseteq J(w)$, and hence $L_I \subseteq L_{J(w)}$~\cite[Section~1.2]{Hodges.Yong}. This implies $B_I \subseteq B_{J(w)}$. By \cite[Theorem 10.6.(4)]{Borel}, $B_I=T\ltimes U_I$. Combining all this we have
\begin{equation}
\label{eq:dimUpperBoundSPherical2}
\dim{K} = \dim{B_I} \leq \dim{B_{J(w)}} = \dim{T} + \dim{U_{J(w)}}.
\end{equation}

Let $D = [n-1]-J(w)=\{d_1<d_2<\ldots<d_k\}$. It follows from \eqref{eq:dimUnipotent} and \eqref{eq:formUnipotent} that
\begin{equation*}
\dim{U_{J(w)}} = {d_1-d_0 \choose 2} + {d_2-d_1 \choose 2} + \cdots + {d_{k+1}-d_k \choose 2}.
\end{equation*}
The right hand side is maximized when there exists a $t$ such that $d_t - d_{t-1} = n - k$ and $d_j - d_{j-1} = 1$ for all $j \neq t$. Thus
\begin{equation*}
\dim{U_{J(w)}} \leq {n-k \choose 2} = {n-((n-1)-d(w))  \choose 2} = {d(w) + 1 \choose 2}.
\end{equation*}
Combining this with \eqref{eq:dimUpperBoundSPherical}, \eqref{eq:dimUpperBoundSPherical2}, and the fact that $\ell(w) = \dim{X_w}$, we see $\ell(w) \leq n + {d(w) + 1 \choose 2}$, that is, $w$ is proper.
\end{proof}

Next, we recall the definition of $I$-spherical permutations in $S_n$ \cite{Hodges.Yong}. Let $s_i=(i \ i+1)$ denote the
simple transposition interchanging $i$ and $i+1$. An expression $w=s_{i_1}s_{i_2}\cdots s_{i_{\ell}}$ for $w\in S_n$
is \emph{reduced} if $\ell=\ell(w)$. Let ${\sf Red}(w)$ be the set of all reduced expressions for $w$.

\begin{definition}[Definition~3.1 of \cite{Hodges.Yong}]\label{def:Ispherical}
$w\in S_n$ is \emph{$I$-spherical} 
if $R=s_{i_1}s_{i_2}\cdots s_{i_{\ell(w)}}\in {\sf Red}(w)$ exists such that
\begin{itemize}
\item[(I)] $s_{d_i}$ appears at most once in $R$
\item[(II)] $\#\{m:d_{t-1}<i_m< d_t\} \leq {d_{t}-d_{t-1}+1\choose 2}-1$   for $1\leq t\leq k+1$.
\end{itemize}
\end{definition}

This is a combinatorial analogue of Proposition~\ref{prop:Schubsphericalisgood}:

\begin{proposition}
\label{prop:Isphericalisgood}
Let $w\in S_n$ and $I\subseteq J(w)$.
If $w$ is $I$-spherical then $w$ is proper.
\end{proposition}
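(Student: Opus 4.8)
The plan is to read off an upper bound for $\ell(w)$ directly from the reduced word furnished by $I$-sphericality, and then to recycle the convexity maximization already used in the proof of Proposition~\ref{prop:Schubsphericalisgood}. Fix a reduced expression $R=s_{i_1}\cdots s_{i_{\ell(w)}}$ satisfying (I) and (II), and recall that $D=[n-1]-I=\{d_1<\cdots<d_k\}$ with $d_0=0$ and $d_{k+1}=n$. Every index $i_m\in[n-1]$ either equals one of the $d_i$ or lies in exactly one open interval $(d_{t-1},d_t)$, so I would sort the $\ell(w)$ letters of $R$ into those two kinds and count each kind.

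For the first kind, condition (I) says each $s_{d_i}$ occurs at most once, so there are at most $k$ such letters in all. For the second kind, condition (II) caps the number of letters with index in $(d_{t-1},d_t)$ by $\binom{d_t-d_{t-1}+1}{2}-1$. Setting $a_t:=d_t-d_{t-1}$, so that $a_t\geq 1$ and $\sum_{t=1}^{k+1}a_t=n$, summing the two estimates yields
\[
\ell(w)\;\leq\; k+\sum_{t=1}^{k+1}\left[\binom{a_t+1}{2}-1\right]\;=\;\sum_{t=1}^{k+1}\binom{a_t+1}{2}-1.
\]

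Next, since $a\mapsto\binom{a+1}{2}$ is convex, the sum $\sum_t\binom{a_t+1}{2}$ over compositions of $n$ into $k+1$ positive parts is largest when the mass is concentrated, i.e.\ one part equals $n-k$ and the remaining $k$ parts equal $1$ --- exactly the optimization performed in Proposition~\ref{prop:Schubsphericalisgood}. This gives the $w$-independent bound
\[
\ell(w)\;\leq\;\binom{n-k+1}{2}+k-1.
\]

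Finally I would translate this estimate from $k$ into $d(w)$. Because $I\subseteq J(w)$ we have $|I|\leq d(w)$, hence $k=(n-1)-|I|\geq (n-1)-d(w)$; and the right-hand side above is non-increasing in $k$ on this range, so it is maximized at $k=(n-1)-d(w)$. Substituting and using $\binom{d(w)+2}{2}=\binom{d(w)+1}{2}+(d(w)+1)$ collapses the bound to $\ell(w)\leq \binom{d(w)+1}{2}+n-1\leq n+\binom{d(w)+1}{2}$, which is exactly properness. I do not anticipate a real obstacle: the only things needing verification are the monotonicity in $k$ and the cancellation in the last line, and the whole argument is the combinatorial shadow of the dimension count in Proposition~\ref{prop:Schubsphericalisgood}, with $\binom{a_t+1}{2}$ replacing $\binom{a_t}{2}$.
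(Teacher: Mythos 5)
Your proof is correct, and although the core letter-count is the same as the paper's, you handle the general case $I\subseteq J(w)$ by a genuinely different and more self-contained route. Both arguments sort the letters of the reduced word $R$ into those with index in $D$ (at most $k$ of them, by condition (I) of Definition~\ref{def:Ispherical}) and those with index in an open interval $(d_{t-1},d_t)$ (bounded via condition (II)), and both then maximize the resulting sum of binomial coefficients by concentrating the parts, exactly as in Proposition~\ref{prop:Schubsphericalisgood}. The structural difference is this: the paper carries out the count only for $I=J(w)$, where $k=n-1-d(w)$ is pinned down, and disposes of general $I$ by invoking \cite[Proposition~2.12]{Hodges.Yong} (that $I$-spherical implies $J(w)$-spherical), a nontrivial external fact about reduced words. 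You instead run the count for arbitrary $I\subseteq J(w)$, obtaining $\ell(w)\leq\binom{n-k+1}{2}+k-1$ with $k=(n-1)-|I|$, and then eliminate the dependence on $I$ by a monotonicity argument: writing $f(k)=\binom{n-k+1}{2}+k-1$, one has $f(k)-f(k+1)=(n-k)-1\geq 0$ for $k\leq n-1$, so from $|I|\leq d(w)$, i.e.\ $k\geq(n-1)-d(w)$, you may substitute $k=(n-1)-d(w)$ and collapse to $\ell(w)\leq\binom{d(w)+1}{2}+n-1$; this computation checks out, including the final cancellation via $\binom{d(w)+2}{2}=\binom{d(w)+1}{2}+d(w)+1$. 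Your version buys independence from the external citation, making the proposition self-contained given only Definition~\ref{def:Ispherical}; the paper's version buys a shorter computation by fixing $k$ at the outset and leaning on a result it has available anyway. Both routes land on the same bound $\ell(w)\leq n-1+\binom{d(w)+1}{2}$, which is slightly stronger than properness requires.
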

\begin{proof}
First suppose $I=J(w)$. Consider a reduced word $R\in {\sf Red}(w)$. By Definition~\ref{def:Ispherical}(I), at most $n-1-d(w)$
of the factors of $R$ are of the form $s_x$ where $x\not\in J(w)$.
Thus, at least $\ell(w)-(n-1-d(w))$ factors are of the form $s_x$ where $x\in J(w)$.
Clearly, if $j_1,...j_k$ are positive integers then
$\sum_{i=1}^{k+1} {j_i+1\choose 2} \leq {j_1+...+j_{k+1}+1 \choose 2}$. 
Equivalently,
\[\sum_{i=1}^{k+1} {j_i+2\choose 2} - 1 = \sum_{i=1}^{k+1} {j_i+1\choose 2} + j_i \leq {j_1+...+j_{k+1}+1 \choose 2}+(j_1+...+j_{k+1}).\]

Set $j_i=d_{i}-d_{i-1}-1$ (for $1\leq i\leq k+1$). Then $j_1+\ldots +j_{k+1}=d_{k+1}-d_0-(k+1)=n-1-k=d(w)$. Thus, by 
Definition~\ref{def:Ispherical}(II), at most 
${d(w)+1 \choose 2} +d(w)$ factors are of the from $s_x$ where $x\in J(w)$.
Therefore, 
\[{d(w)+1\choose 2}+d(w)\geq \ell(w)-(n-1-d(w)).\] 
Rearranging,
$\ell(w)\leq n-1-d(w)+{d(w)+1\choose 2}+d(w)\!\!\iff\!\! \ell(w)<n+{d(w)+1\choose 2}$. So,
$w$ is proper.

For $I\neq J(w)$, we use that if $w$ is $I$-spherical then $w$ is $J(w)$-spherical
\cite[Proposition~2.12]{Hodges.Yong}.
\end{proof}

\noindent
\emph{Conclusion of proof of Corollaries~\ref{thm:main1} and~\ref{thm:main3}:} These claims follow immediately
 from Theorem~\ref{thm:main2} combined with Proposition~\ref{prop:Schubsphericalisgood} and Proposition~\ref{prop:Isphericalisgood}, respectively.\qed

Although we chose not to pursue it, using similar techniques, it should be possible to prove analogues of our results for the other classical Lie types.
 
\section*{Acknowledgements}
We thank Mahir Can and Yibo Gao for helpful discussions.
The project was completed
as part of the ICLUE (Illinois Combinatorics Lab for Undergraduate Experience) program, which was funded by the NSF RTG grant DMS 1937241. 
AY was partially supported by a Simons Collaboration grant, and
UIUC's Center for Advanced Study. RH was partially supported by an AMS Simons Travel grant.

\end{document}